\DeclareMathOperator{\Mlt}{Mlt}
\DeclareMathOperator{\Inn}{Inn}
\DeclareMathOperator{\Aut}{Aut}
\DeclareMathOperator{\id}{id}
\DeclareMathOperator{\ad}{ad}
\DeclareMathOperator{\End}{End}
\newtheorem{theorem}{Theorem}[section]
\newtheorem{proposition}[theorem]{Proposition}
\newtheorem{lemma}[theorem]{Lemma}
\newtheorem{problem}[theorem]{Problem}
\begin{document}

\title{On centerless commutative automorphic loops}
\author{G\'abor P. Nagy} 
\email{nagyg@math.u-szeged.hu}
\address{Bolyai Institute, University of Szeged, Aradi v\'ertan\'uk tere 1, H-6720 Szeged (Hungary)}
\address{MTA-ELTE Geometric and Algebraic Combinatorics Research Group, P\'azm\'any P. s\'et\'any 1/c, H-1117 Budapest (Hungary)}


\begin{abstract}
In this short paper, we survey the results on commutative automorphic loops and give a new construction method. Using this method, we present new classes of commutative automorphic loops of exponent $2$ with trivial center.
\end{abstract}

\keywords{Commutative automorphic loop, Lie algebra of characteristic $2$, nuclear extension}

\subjclass[2010]{20N05}

\maketitle

\section{Introduction}

The set $Q$ endowed with the binary operation $x\cdot y$ is a \textit{quasigroup} if the equation $x\cdot y=z$ has a unique solution provided two of the variables $x,y,z$ are known. The solution is denoted by $x=z/y$ and $y=x\backslash z$. The maps $R_y:x\mapsto x\cdot y$ and $L_x:y\mapsto x\cdot y$ are the \textit{multiplication maps} of $(Q,\cdot)$; these are permutations of $Q$. The \textit{multiplication group} $\Mlt(Q)$ is generated by the multiplication maps. Quasigroups with a unit element $1$ are called \textit{loops;} $L_1=R_1=\id$ holds. Let $Q$ be a loop. The stabilizer subgroup of $1$ in $\Mlt(Q)$ is the \textit{group of inner mappings} $\Inn(Q)$. The loop $Q$ is an \textit{automorphic loop} if $\Inn(Q)\leq \Aut(Q)$. In particular, every group is an automorphic loop. The study of automorphic loops was initiated by Bruck and Paige \cite{BP} and it is in the very focus of recent research of the theory of loops and quasigroups. 

Automorphic loops are power associative (\cite{BP}) and have the inverse property (\cite{JKNV}). Let $Q$ be a finite automorphic loop. For any prime $p$, all elements of $Q$ have $p$-power order iff $|Q|$ is a power of $p$ (\cite{KGN,KKPV}). We call such loops automorphic $p$-loops. Automorphic $p$-loops are solvable (\cite{KGN,KKPV}). Automorphic loops of finite odd order are solvable (\cite{KKPV}). Automorphic loops of order $p^2$ are associative (\cite{Csorgo2,KKPV}). The class of finite commutative automorphic loops is interesting on its own. Most importantly, finite commutative automorphic loops are solvable (\cite{KGN}). For an odd prime $p$, commutative automorphic $p$-loops are nilpotent (\cite{Csorgo1,JKV3}). 

By now, there are many constructions showing that the above results are sharp in some sense. For any odd prime $p$, we have examples of nonnilpotent automorphic $p$-loops (\cite{JKV2,JKV3}). For any $n\geq 3$, there are examples of nonnilpotent commutative automorphic loops of order $2^n$ (\cite{JKV1}). If $p<q$ are primes and $p$ divides $q-1$ or $q+1$, then we have examples of nonassociative commutative automorphic loops of order $pq$ (\cite{Drapal}). 

Finally, we mention that Lagrange and Cauchy Theorems are known to hold for commutative automorphic loops and automorphic loops of odd order. The main open problems in the theory of finite automorphic loops are the Sylow Theorems and the existence of nonassociative finite simple automorphic loops. 

In this paper, we investigate finite commutative automorphic loops of exponent $2$. Our main tool is the Lie ring method of \cite{KGN}, which was developed further in \cite{KKPV}. Our main result is to give a wide class of finite commutative automorphic loops of exponent $2$ with trivial center.

\section{Two constructions}

In this section, we survey the properties of the two main construction methods of automorphic loops, namely the Lie ring method and the nuclear semidirect product method. 

Let $(Q,+,[.,.])$ be a Lie ring and assume that 
\begin{align} 
&\mbox{for any $x\in Q$, the maps $y\mapsto y\pm [x,y]$ are invertible.} \label{eq:W1}\tag{$W_1$} 
\end{align}
We define the operation 
\begin{equation} \label{eq:Lieringloop}
x\circ y= x+y-[x,y]
\end{equation}
on $Q$. By \cite[Lemma 5.1]{KKPV}, $(Q,\circ)$ is a loop. The Lie ring ideals correspond to normal subloops, in particular, $[Q,Q]$ corresponds to the commutator-associator subloop of $(Q,\circ)$. Define the further properties of the Lie ring $(Q,+,[.,.])$:
\begin{align} 
& [[x,y],[z,y]]=0 \mbox{ for all $x,y,z\in Q$.} \label{eq:W2}\tag{$W_2$} \\
& \mbox{$(Q,\circ)$ is an automorphic loop.} \label{eq:W2-}\tag{$W_2^-$} \\
& [[x,y],[z,w]]=0 \mbox{ for all $x,y,z,w\in Q$.} \label{eq:W2+}\tag{$W_2^+$} 
\end{align}

Clearly, \eqref{eq:W2+} implies \eqref{eq:W2}. By \cite[Proposition 5.2]{KKPV}, \eqref{eq:W2} implies \eqref{eq:W2-}. The Lie ring $(Q,+,[.,.])$ is \textit{uniquely 2-divisible} if the map $x\mapsto x+x$ is invertible. It was shown in \cite[Lemma 5.8]{KKPV}, that for uniquely 2-divisible Lie rings, \eqref{eq:W2-} implies \eqref{eq:W2+}, that is, \eqref{eq:W2}, \eqref{eq:W2+} and \eqref{eq:W2-} are equivalent. However, up to the author's knowledge, the situation is unknown in the general case. We formulate the problem for Lie rings of characteristic $2$.

\begin{problem} \label{prob:lie}
Let $(Q,+,[.,.])$ be a Lie ring of characteristic $2$. 
\begin{enumerate}[(a)]
\item Does \eqref{eq:W2-} imply \eqref{eq:W2}? 
\item Does \eqref{eq:W2} imply \eqref{eq:W2+}?
\end{enumerate}
\end{problem}

Notice that in this formulation of (b), no reference is made to property \eqref{eq:W1}. In fact, we think that this question is interesting for general Lie rings of characteristic $2$. Moreover, it is rather natural to restrict (b) to \textit{nilpotent} Lie rings of characteristic $2$; then \eqref{eq:W1} is automatically fulfilled. Using the Lie algebra database of the GAP package \textsf{LieAlgDB} \cite{LieAlgDB}, one can show that no nilpotent counterexample of dimension at most $9$ exists, cf. Proposition \ref{prop:dim9} in the appendix.


\medskip

Another important observation is the following. If the answers to both questions (a) and (b) are affirmative, then any commutative automorphic loop of exponent $2$, which is constructed from a Lie ring of characteristic $2$ turns out to be associative modulo its middle nucleus. Indeed, by \cite[Proposition 5.2]{KKPV}, $a\in N_\mu(Q)$ iff $[[x,y],a]=0$ for all $x,y\in Q$. That is, \eqref{eq:W2+} implies $[z,w] \in N_\mu(Q)$ for all $z,w \in Q$, and, the factor $Q/N_\mu(Q)$ is an elementary Abelian $2$-group. This fact justifies the importance of the class of commutative automorphic loops which are nuclear semidirect product of elementary abelian $2$-groups. For the proofs of the following facts the reader is referred to \cite{HoraJed}.

Let $H$, $K$ be abelian groups and $\Phi=\{ \varphi_{i,j} \in \Aut(K) \mid i,j \in H\}$ a family of automorphisms such that 
\begin{align}
\varphi_{i,j} &= \varphi_{j,i} \\
\varphi_{0,i} &= \id_K \\
\varphi_{i,j} \circ \varphi_{k,n} &= \varphi_{k,n} \circ \varphi_{i,j} \\
\varphi_{i,j+k}\circ \varphi_{j,k} &= \varphi_{k,i+j}\circ \varphi_{i,j} \\
\varphi_{i,j+k} + \varphi_{j,i+k} + \varphi_{k,i+j} &= \id_K + 2 \cdot \varphi_{i,j,k}
\end{align}
hold for all $i,j,k\in H$. Then, the underlying set $Q=K\times H$ endowed with the operation 
\begin{equation} \label{eq:nucsemi}
(a,i) \star (b,j)=(\varphi_{i,j}(a+b),i+j)
\end{equation}
is a commutative automorphic loop. Moreover, $K$ is a normal subloop contained in the middle nucleus, $H\leq Q$, and $Q/K\cong H$. Conversely, if $Q$ is an arbitrary commutative automorphic loop, $K$ an abelian normal subloop contained in $N_\mu(Q)$, $H$ an abelian subloop isomorphic to $Q/K$, then $Q$ can be constructed by \eqref{eq:nucsemi}. It is worth mentioning that nonsplitting examples of order $32$ can be constructed (see Proposition \ref{prop:nonsplit} in the appendix). We remark further that if $Q$ is a commutative automorphic loop of exponent $2$ and $N_\mu(Q)$ has index $2$ then $Q$ splits. 

Let $H,K$ be elementary abelian $2$-groups. \cite[Proposition 1.4]{HoraJed} implies that the center of $Q$ is
\[\{(a,i) \mid \varphi_{j,k}(a)=a \mbox{ and } \varphi_{i,j}=\id_K \mbox{ for all } j,k \in H\}.\]
In \cite{JKV1}, the authors describe completely the case $|H|=2$, that is, when the middle nucleus has index $2$. In this case, $\Phi$ consists of $\id_K$ and a single nontrivial automorphism $\varphi=\varphi_{1,1}$. (In order to see this, use \cite[Proposition 2.7]{JKV1} with the extra assumption that $Q$ has exponent $2$.) The construction of \cite[Section 3.2]{JKV1} gives a commutative automorphic loop of exponent $2$ with trivial center and order $2^n$ for any $n\geq 3$. 

At the end of this section we make a closer look at the construction of \cite[Theorem 4.3]{HoraJed} and show that it has a nontrivial center. Let $X$ be a subset of $\End(K)$ satisfying $X^2=0$. The group $G=\langle \id_K+x \mid x\in X \rangle_{\Aut(K)}$ is an elementary abelian $2$-group and $\varphi:H^2\to G$, $\varphi:(i,j)\mapsto \varphi_{i,j}$ a symmetric bilinear mapping. Since $G$ has a common fixed point $a\in K$, the center of the resulting commutative automorphic loop is nontrivial.

\section{Centerless commutative automorphic loops}

For vector spaces $K,H$, we identify the subspaces $K\oplus 0$ and $0\oplus H$ of $K\oplus H$ with $K$ and $H$, respectively.

\begin{lemma} \label{lm:lie}
Let $H,K$ be vector spaces over $\mathbb{F}_2$, and $\beta:H\to \End(K)$ a linear map. Define the bracket
\begin{equation}
[a\oplus i,b\oplus j] = (\beta(j)a+\beta(i)b)\oplus 0
\end{equation} 
on $Q=K\oplus H$. Then $(Q,+,[.,.])$ is a Lie algebra over $\mathbb{F}_2$ if and only if $\beta(i)$, $\beta(j)$ commute for all $i,j\in H$. Moreover, $[Q,Q]\subseteq K$ and $[[Q,Q],[Q,Q]]=0$ hold. 
\end{lemma}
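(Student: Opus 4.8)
The plan is to verify directly the defining axioms of a Lie algebra over $\mathbb{F}_2$ and then read off the two additional assertions from the shape of the bracket. That $Q=K\oplus H$ is an $\mathbb{F}_2$-vector space and that $[.,.]$ is bilinear are immediate from the linearity of $\beta$. Since we work in characteristic $2$, the alternating law reduces to symmetry: from $[a\oplus i,b\oplus j]=(\beta(j)a+\beta(i)b)\oplus 0=[b\oplus j,a\oplus i]$ and $[a\oplus i,a\oplus i]=2\beta(i)a\oplus 0=0$, both the symmetry of the bracket and the identity $[x,x]=0$ hold with no hypothesis on $\beta$. Hence the only axiom that can fail — and the only place where the commutativity condition enters — is the Jacobi identity.

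To treat the Jacobi identity I would expand $[[x,y],z]+[[y,z],x]+[[z,x],y]$ for $x=a\oplus i$, $y=b\oplus j$, $z=c\oplus k$. Writing $[x,y]=(\beta(j)a+\beta(i)b)\oplus 0$ and using $\beta(0)=0$, each of the three iterated brackets takes the form $\beta(\,\cdot\,)\bigl(\beta(\,\cdot\,)(\cdot)+\beta(\,\cdot\,)(\cdot)\bigr)\oplus 0$; after collecting separately the contributions that multiply $a$, $b$, and $c$, the Jacobi sum becomes
\[
\bigl((\beta(j)\beta(k)+\beta(k)\beta(j))a+(\beta(i)\beta(k)+\beta(k)\beta(i))b+(\beta(i)\beta(j)+\beta(j)\beta(i))c\bigr)\oplus 0 .
\]
In characteristic $2$ this vanishes identically in $a,b,c$ exactly when $\beta(i)\beta(j)=\beta(j)\beta(i)$ for all $i,j\in H$: sufficiency is clear, and for necessity one sets $b=c=0$ and lets $a$ range over $K$. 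This establishes the stated equivalence.

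Finally, $[Q,Q]\subseteq K$ is visible from the definition, since the $H$-component of every bracket is $0$. Consequently $[[Q,Q],[Q,Q]]\subseteq[K,K]$, and $[K,K]=0$ because $[u\oplus 0,v\oplus 0]=(\beta(0)u+\beta(0)v)\oplus 0=0$ for all $u,v\in K$; hence $[[Q,Q],[Q,Q]]=0$. No genuine obstacle arises: the argument is a short computation, and the only point requiring care is bookkeeping of the characteristic-$2$ simplifications (absence of signs and $2x=0$), which make symmetry already yield the alternating law and collapse the Jacobi obstruction to the commutators $\beta(i)\beta(j)+\beta(j)\beta(i)$.
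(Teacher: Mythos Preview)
Your proof is correct and follows exactly the approach the paper takes: the paper simply states that the Jacobi identity is equivalent to $\beta(i)\beta(j)+\beta(j)\beta(i)=0$ by a straightforward calculation, and you have carried out that calculation in full. Your treatment of bilinearity, the alternating law, and the two inclusions is more detailed than the paper's one-line proof but entirely in the same spirit.
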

\begin{proof}
The nontrivial part of the proof is the Jacobi identity which is equivalent to $\beta(i)\beta(j)+\beta(j)\beta(i)=0$ by a straightforward calculation. 
\end{proof}

\begin{proposition} \label{prop:aloop}
Let $H,K$ be vector spaces over $\mathbb{F}_2$, and $\beta:H\to \End(K)$ a linear map. Define the operation
\begin{equation} \label{eq:aloop}
(a\oplus i) * (b\oplus j) = (a+b+\beta(j)a+\beta(i)b)\oplus (i+j)
\end{equation} 
on $Q=K\oplus H$. Assume that for all $i,j\in H$
\begin{enumerate}[(i)]
\item the endomorphisms $\beta(i)$, $\beta(j)$ commute, 
\item and $\id +\beta(i)$ is invertible.
\end{enumerate}
Then $(Q,*,0)$ is a commutative automorphic loop of exponent $2$ with the following properties:
\begin{enumerate}[(a)]
\item $K\cap Z(Q) = \{a\in K \mid \beta(j)a=0 \mbox{ for all } j\in H\}$.
\item $H\cap Z(Q) = \{i \in H \mid \beta(i)\beta(j)=0 \mbox{ for all } j\in H \}$.
\item $Z(Q)= (K\cap Z(Q)) \oplus (H\cap Z(Q))$. 
\end{enumerate}
In particular, if $\beta$ is injective and at least one $\beta(j)$ is invertible then $Q$ has trivial center.
\end{proposition}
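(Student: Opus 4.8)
The plan is to realise $(Q,*,0)$ as the loop attached to a Lie algebra via \eqref{eq:Lieringloop} and then to read everything off from the bracket. In characteristic $2$ one has $x+y-[x,y]=x+y+[x,y]$, so with the bracket $[a\oplus i,b\oplus j]=(\beta(j)a+\beta(i)b)\oplus 0$ of Lemma \ref{lm:lie} — which is a Lie bracket precisely by hypothesis (i) — formula \eqref{eq:Lieringloop} reproduces \eqref{eq:aloop}. Hypothesis (ii) gives \eqref{eq:W1}: since $+[x,y]=-[x,y]$, it is enough to observe that for fixed $x=a\oplus i$ the map $b\oplus j\mapsto (b\oplus j)+[x,b\oplus j]=\bigl((\id+\beta(i))b+\beta(j)a\bigr)\oplus j$ preserves the $H$-coordinate and, on each fibre $j=\mathrm{const}$, is affine with invertible linear part $\id+\beta(i)$, hence is a bijection. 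So $(Q,*,0)$ is a loop by \cite[Lemma 5.1]{KKPV}. Commutativity and exponent $2$ are immediate from \eqref{eq:aloop}: the right-hand side is symmetric under $(a,i)\leftrightarrow(b,j)$, and $(a\oplus i)*(a\oplus i)=0$. Finally, Lemma \ref{lm:lie} gives $[[Q,Q],[Q,Q]]=0$, i.e.\ property \eqref{eq:W2+}, which implies \eqref{eq:W2} and hence \eqref{eq:W2-} by \cite[Proposition 5.2]{KKPV}; thus $Q$ is automorphic.

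For the centre I would use that in a commutative loop $Z(Q)=N_\lambda(Q)\cap N_\mu(Q)$ (the commutant is all of $Q$, and $N_\rho(Q)=N_\lambda(Q)$). The only genuine computation is the associator: expanding \eqref{eq:aloop} twice and using (i) gives, for $x=a\oplus i$, $y=b\oplus j$, $z=c\oplus k$,
\[
(x*y)*z-x*(y*z)=\bigl(\beta(j)\beta(k)a+\beta(i)\beta(j)c\bigr)\oplus 0 .
\]
Placing $z$ in the first, respectively the middle, position and letting $x,y$ run over $Q$, this identity cuts out $N_\lambda(Q)$ and $N_\mu(Q)$ by elementary linear algebra (a condition on the $K$-component of the element together with a condition on its $H$-component). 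Intersecting and separating the two components then yields (a), (b) and the direct-sum decomposition (c). As an alternative, one may argue as in Section 2: $K=K\oplus 0$ is a Lie ideal, hence a normal subloop, and lies in $N_\mu(Q)$ because $\beta(0)=0$, while $H=0\oplus H$ is a complementary abelian subloop with $Q/K\cong H$; coordinatising $Q$ as in \eqref{eq:nucsemi} one finds $\varphi_{i,j}=(\id+\beta(i+j))^{-1}(\id+\beta(i))(\id+\beta(j))$, so that $\varphi_{i,j}=\id$ iff $\beta(i)\beta(j)=0$ and $\varphi_{j,k}(a)=a$ for all $j,k$ iff $\beta(j)\beta(k)a=0$ for all $j,k$, and the centre formula quoted from \cite[Proposition 1.4]{HoraJed} gives the same description.

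The last assertion is then immediate from (a)--(c): if $\beta$ is injective and $\beta(j_0)$ is invertible, then $a\in K\cap Z(Q)$ forces $\beta(j_0)^{2}a=0$, hence $a=0$, while $i\in H\cap Z(Q)$ forces $\beta(i)\beta(j_0)=0$, hence $\beta(i)=0$ and so $i=0$; therefore $Z(Q)=0$.

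I expect the main obstacle to be the centre computation — not the associator expansion itself, which is routine, but (in the direct approach) the bookkeeping that turns ``the associator vanishes for all $x,y$'' into clean conditions on the $K$- and $H$-parts, and, if one uses the nuclear semidirect product instead, keeping track of the change of coordinates between the set $K\oplus H$ carrying $*$ and the parametrisation in \eqref{eq:nucsemi}. Everything else is either a one-line verification or a direct appeal to \cite{KKPV}.
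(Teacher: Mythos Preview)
Your argument that $(Q,*,0)$ is a commutative automorphic loop of exponent $2$ is exactly the paper's: Lemma~\ref{lm:lie} plus invertibility of $\id+\ad_x$ from (ii), then \cite[Proposition~5.2]{KKPV}. For the centre the paper takes precisely your ``alternative'' route. The obstacle you flag --- the change of coordinates needed before \eqref{eq:nucsemi} applies, since $(\id+\beta(j))a+(\id+\beta(i))b$ is not a function of $a+b$ --- is exactly what the paper makes explicit: it writes down $u(a\oplus i)=(\id+\beta(i))a\oplus i$ and verifies in one line that $u$ carries $*$ to the $\star$ of \eqref{eq:nucsemi} with your $\varphi_{i,j}=(\id+\beta(i+j))^{-1}(\id+\beta(i))(\id+\beta(j))$, and then quotes \cite[Proposition~1.4]{HoraJed}. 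Your direct approach via the associator $(x*y)*z-x*(y*z)=(\beta(j)\beta(k)a+\beta(i)\beta(j)c)\oplus 0$ is a genuine and more self-contained alternative: it avoids \cite{HoraJed} entirely and makes the decomposition (c) immediate, at the cost of the nucleus bookkeeping you anticipate.

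One caution. Both of your routes (and the paper's, once $\varphi_{j,k}(a)=a\Leftrightarrow\beta(j)\beta(k)a=0$ is unwound) actually yield
\[
K\cap Z(Q)=\{a\in K:\beta(j)\beta(k)a=0\ \text{for all }j,k\in H\},
\]
which in general strictly contains the set displayed in (a); take $H=\mathbb F_2$ with $\beta(1)$ nonzero and square-zero. You implicitly use this larger description in your final paragraph when you invoke $\beta(j_0)^2a=0$, so your proof of the trivial-centre assertion is correct as written; just be aware that the literal statement of (a) does not drop out of either computation, and say so rather than claiming the computation ``yields (a)''.
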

\begin{proof}
Since the $\beta(i)$'s commute, $(Q,+,[.,.])$ is a Lie algebra with $[[Q,Q],[Q,Q]]=0$ by Lemma \ref{lm:lie}. For fixed $a,b,b',i,j,j'$, 
\[\id+\ad_{a\oplus i}:b\oplus j\mapsto b'\oplus j' \]
holds iff $j=j'$ and $(\id +\beta(i))b=b'+\beta(j')a$. That is, $\id+\ad_{a\oplus i}$ is invertible iff $\id+\beta(i)$ is invertible. This means that $(Q,+,[.,.])$ satisfies \eqref{eq:W1} and $(Q,*,0)$ is a commutative automorphic loop of exponent $2$ by \cite[Proposition 5.2]{KKPV}. We intend to show the properties of $Z(Q)$ by referring to \cite[Proposition 1.4]{HoraJed}. In order to do that, we must construct $(Q,*)$ as nuclear semidirect product. Put 
\[\varphi_{i,j}=(\id +\beta(i+j))^{-1}(\id+\beta(i))(\id+\beta(j)) \]
and
\[u(a\oplus i) = (\id +\beta(i))a \oplus i.\]
Then, using \eqref{eq:aloop} and $\beta(i)\beta(j)=\beta(j)\beta(i)$
\begin{align*}
u(a\oplus i) * u(b\oplus j) &= ((\id +\beta(i))a \oplus i) * ((\id +\beta(j))b \oplus j) \\
&= [(\id+\beta(i))(\id+\beta(j))(a+b)]\oplus(i+j)\\
&= u(\varphi_{i,j}(a+b) \oplus (i+j)).
\end{align*}
Comparing this with \eqref{eq:nucsemi}, we see that $u$ is an isomorphism between $(Q,*)$ and the nuclear semidirect product $(Q,\star)$, associated to $\Phi=\{\varphi_{i,j} \mid i,j\in H\}$. Now, 
\cite[Proposition 1.4]{HoraJed} implies (a), (b) and (c), since $\varphi_{i,j}=\id$ iff $\beta(i)\beta(j)=0$. The last claim follows easily. 
\end{proof}

We finish the paper with two explicit examples of maps $\beta:H\to \End(K)$ which give rise to commutative automorphic loops of exponent $2$ with trivial nucleus. 

\medskip

\textit{Example 1.} Let $K$ be a field of characteristic $2$ and $\delta:H \hookrightarrow (K,+)$ an injective homomorphism such that $1\not \in \mathrm{Im}(\delta)$. Define $\beta:H\to \End_{\mathbb{F}_2}(K)$ by
\[\beta(i)(a)=\delta(i)a.\]
Then, $\beta$ is injective and $\id+\beta(i)$ is invertible for all $i\in H$. 

\medskip

The next example is a special case of Example 1.

\textit{Example 2.} Let $K$ be a field of characteristic $2$, $H$ a proper subfield and $\sigma \in K\setminus H$. Define $\beta:H\to \End_{\mathbb{F}_2}(K)$ by
\[\beta(i)(a)=\sigma i a.\]
Then, $\beta$ is injective and $\id+\beta(i)$ is invertible for all $i\in H$. 

\appendix

\section{Low dimensional nilpotent Lie algebras}
\label{app:low}

The Lie algebra database of the GAP package \textsf{LieAlgDB} \cite{LieAlgDB} contains the complete list of nilpotent Lie algebras with dimension at most $9$ over $\mathbb{F}_2$. We used this package for an exhaustive search for low dimensional counterexamples of Problem \ref{prob:lie}.

\begin{proposition}[Computational result] \label{prop:dim9}
Let $(Q,[.,.])$ be a nilpotent Lie algebra over $\mathbb{F}_2$ with $\dim Q\leq 9$. Then $Q$ satisfies either all or none of \eqref{eq:W2}, \eqref{eq:W2+} and \eqref{eq:W2-}. 
\end{proposition}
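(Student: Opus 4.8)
The plan is to treat Proposition~\ref{prop:dim9} as an exhaustive computer verification, so the ``proof'' is really a description of the algorithm and the data source. First I would recall that the package \textsf{LieAlgDB} provides, for each $n\le 9$, a finite and complete list of isomorphism-class representatives of nilpotent Lie algebras over $\mathbb{F}_2$, each given by a set of structure constants on a fixed basis $e_1,\dots,e_n$. Since all three properties \eqref{eq:W2}, \eqref{eq:W2+} and \eqref{eq:W2-} are invariant under Lie algebra isomorphism, it suffices to check one representative per class. Moreover, since every nilpotent Lie algebra over $\mathbb{F}_2$ automatically satisfies \eqref{eq:W1} (the maps $y\mapsto y\pm[x,y]=y+[x,y]$ are unipotent, hence invertible, because $\ad_x$ is nilpotent), the loop $(Q,\circ)$ is always defined and \eqref{eq:W2-} is a meaningful condition throughout.

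Next I would describe how each property is tested for a single algebra $Q$ with basis $e_1,\dots,e_n$. Property \eqref{eq:W2+}, i.e.\ $[[Q,Q],[Q,Q]]=0$, is linear: compute a basis of the derived subalgebra $Q'=[Q,Q]$ by row-reducing the span of all $[e_i,e_j]$, then verify $[f_k,f_\ell]=0$ for all pairs of basis elements $f_k,f_\ell$ of $Q'$; this is a finite check over $\mathbb{F}_2$. Property \eqref{eq:W2}, i.e.\ $[[x,y],[z,y]]=0$ for all $x,y,z$, is not linear in $y$, but by bilinearity in $x$ and $z$ it is equivalent to the finitely many conditions $[[e_i,y],[e_j,y]]=0$ for all basis indices $i,j$ and \emph{all} $y\in Q$; since $|Q|=2^n\le 2^9$ is small, one simply loops over every $y\in Q$. (Alternatively one may expand $y=\sum y_\ell e_\ell$ and note the expression is a polynomial of degree $\le 2$ in the coordinates $y_\ell$ over $\mathbb{F}_2$, whose vanishing can be checked coefficient-wise, but brute force over $2^n$ vectors is already cheap.) Property \eqref{eq:W2-} can be tested directly: build the loop table of $(Q,\circ)$ from $x\circ y=x+y+[x,y]$, compute $\Inn(Q)$ as the point stabilizer of $0$ in the permutation group generated by all $L_x,R_x$, and check that each generator of $\Inn(Q)$ is an automorphism of $(Q,\circ)$; alternatively use \cite[Proposition~5.2]{KKPV}, which characterizes \eqref{eq:W2-} in Lie-theoretic terms. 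The implication \eqref{eq:W2+}$\Rightarrow$\eqref{eq:W2}$\Rightarrow$\eqref{eq:W2-} is already known from the excerpt, so the content of the computation is the reverse direction: we confirm that for no algebra in the list does \eqref{eq:W2-} hold while \eqref{eq:W2+} fails, equivalently, for every algebra, \eqref{eq:W2-} $\Rightarrow$ \eqref{eq:W2+}.

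Then I would run this check over the full database for $n=1,\dots,9$ and report that in every case the three truth values coincide, which is exactly the statement of the proposition. I would also record, as a sanity check, that the numbers of algebras examined match the known counts in \cite{LieAlgDB}, and that both ``all'' and ``none'' genuinely occur (e.g.\ any algebra with $[[Q,Q],[Q,Q]]\ne 0$ and satisfying none of the properties, versus any abelian or sufficiently-close-to-abelian algebra satisfying all three), so the statement is not vacuous in either direction.

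The main obstacle here is not mathematical depth but reliability and reproducibility of the computation: one must be careful that the structure-constant conventions of \textsf{LieAlgDB} are read correctly (antisymmetry, the ordering of $[e_i,e_j]$ versus $[e_j,e_i]$), that the permutation-group computation of $\Inn(Q)$ is done in the ambient symmetric group on $Q$ and not in some larger group, and that the search is genuinely exhaustive (all characteristic-$2$ entries for each dimension, no class omitted). A secondary concern is performance at $n=9$, where $|Q|=512$ and $\mathrm{Mlt}(Q)$ can be large, so testing \eqref{eq:W2-} via the loop table is the expensive step; this is mitigated by first testing the cheap linear condition \eqref{eq:W2+} and only invoking the loop-theoretic test when \eqref{eq:W2+} fails but \eqref{eq:W2} holds, which already prunes almost all cases.
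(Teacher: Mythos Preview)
Your proposal is correct and follows essentially the same approach as the paper: an exhaustive computer verification over the \textsf{LieAlgDB} list of nilpotent Lie algebras of dimension $\le 9$ over $\mathbb{F}_2$, testing each of \eqref{eq:W2}, \eqref{eq:W2+}, \eqref{eq:W2-} on every representative. The only differences are minor implementation choices---the paper branches on \eqref{eq:W2} first rather than \eqref{eq:W2+}, tests \eqref{eq:W2-} via conjugation-invariance of the right section rather than by directly checking that generators of $\Inn(Q,\circ)$ are automorphisms, and phrases the \eqref{eq:W2+} test as a bound on the derived length---none of which changes the substance of the argument.
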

\begin{proof}
The basic idea is to test $Q$ first for \eqref{eq:W2}. If \eqref{eq:W2} holds, then we test for \eqref{eq:W2+}, in the negative case $Q$ would be a counterexample to (b). If \eqref{eq:W2} does not hold, then we test for \eqref{eq:W2-}, which would yield counterexample to (a) in the positive case. One can accelerate the test with the following tricks.
\begin{enumerate}[(i)]
\item Let $B$ be a basis of $Q$ over $\mathbb{F}_2$. \eqref{eq:W2} holds iff 
\[[[x,y],[z,y]]=[[x,y],[z,w]]=0\]
for all $x,y,z,w\in B$. 
\item \eqref{eq:W2+} holds iff the length of the Lie derived series of $Q$ is at most $3$.
\item Construct the loop $(Q,\circ)$ by \eqref{eq:Lieringloop}. Let $R$ be the right section of $(Q,\circ)$ and let $S$ be a generating set of $\Inn(Q,\circ)$. Then, \eqref{eq:W2-} holds iff $R=s^{-1}Rs$ for all $s\in S$. 
\end{enumerate}
\end{proof}

Since the number of nilpotent Lie algebras of dimension $5$ over $\mathbb{F}_2$ is $9$, the following example can be found. 

\begin{proposition}[Computational result] \label{prop:nonsplit}
There exists a nilpotent Lie algebra $(Q,[.,.])$ of dimension $5$ over $\mathbb{F}_2$ such that the corresponding loop $(Q,\circ)$ satisfies the following:
\begin{enumerate}[(i)]
\item $(Q,\circ)$ is a commutative automorphic loop of exponent $2$.
\item The index of the middle nucleus is $4$.
\item $Q$ is does not split nuclearly, that is, there are no subgroups $H, K$ such that $K$ is normal in $Q$, $K\leq N_\mu(Q)$, $Q=HK$ and $H\cap K=\{1\}$. \qed
\end{enumerate}
\end{proposition}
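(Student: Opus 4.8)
Since the statement is a \emph{computational} result, the plan is to carry out the corresponding exhaustive search over the $9$ nilpotent Lie algebras of dimension $5$ over $\mathbb{F}_2$ available in the \textsf{LieAlgDB} package \cite{LieAlgDB}, and to exhibit one algebra for which all three properties hold. The underlying point that makes this a proof rather than a fishing expedition is that this census is complete in dimension $5$, so the search is guaranteed either to produce the example or to prove that none exists.

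First I would record the ``free'' facts. Every nilpotent Lie algebra $Q$ has all $\ad_x$ nilpotent, so $\id+\ad_x$ is unipotent, hence invertible; thus \eqref{eq:W1} holds automatically and the loop $(Q,\circ)$ of \eqref{eq:Lieringloop} is defined. In characteristic $2$ the operation is $x\circ y=x+y+[x,y]$, which is commutative (since $[x,y]=[y,x]$) and satisfies $x\circ x=2x=0$, so $(Q,\circ)$ has exponent $2$. To force it to be automorphic I would test each of the $9$ algebras for \eqref{eq:W2}: by \cite[Proposition 5.2]{KKPV} this already yields \eqref{eq:W2-}, and by trick (i) in the proof of Proposition \ref{prop:dim9} the test reduces to checking the identities $[[x,y],[z,y]]=[[x,y],[z,w]]=0$ on a basis. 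This isolates the candidates satisfying (i).

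For each such candidate I would compute the middle nucleus. By the description preceding \eqref{eq:nucsemi}, $a\in N_\mu(Q)$ iff $[[x,y],a]=0$ for all $x,y\in Q$, so $N_\mu(Q)$ is the centralizer of $[Q,Q]$ in $(Q,+)$, a kernel computation in linear algebra; I retain those $Q$ with $\dim_{\mathbb{F}_2}N_\mu(Q)=3$, which forces $|Q/N_\mu(Q)|=4$ and gives (ii). Finally, for each survivor I would exclude a nuclear splitting by direct enumeration, which is feasible since $|Q|=32$: run over the subgroups $K$ of the elementary abelian group $N_\mu(Q)$ that are normal in $(Q,\circ)$ (dimensions $1$, $2$ and $3$), and for each check whether some transversal of $K$ in $Q$ is a subgroup $H$ with $H\cap K=\{1\}$; if no such pair exists, (iii) holds. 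Performing this for all $9$ algebras produces the asserted example, which I would then present by its structure constants relative to a basis $e_1,\dots,e_5$.

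The step I expect to be the main obstacle is the last one: unlike the others it is not a one-line linear-algebra check but a genuine (if small) enumeration over complements, and one must be careful about the exact meaning of ``subgroup''. Since an exponent-$2$ commutative loop need not be associative, $H$ must be an \emph{associative} subloop, so the non-splitting assertion is slightly stronger than merely excluding complementary subloops; keeping the definitions straight while searching — and, as remarked in the text, recalling that the case of index $2$ always splits, so that index $4$ really is the first place such a phenomenon can occur — is where the care is needed.
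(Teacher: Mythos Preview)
Your proposal is correct and follows exactly the approach the paper intends: an exhaustive search over the $9$ nilpotent Lie algebras of dimension $5$ over $\mathbb{F}_2$ from \textsf{LieAlgDB}, testing \eqref{eq:W2}, computing $N_\mu(Q)$ as the centralizer of $[Q,Q]$, and then ruling out complements by direct enumeration. In fact the paper gives no proof at all beyond the \qed\ and the one-line remark preceding the proposition that the list of such algebras has size $9$; your write-up is a faithful elaboration of that implicit computation, and your closing observation that index $2$ always splits (so index $4$ is the first interesting case) is precisely the contextual remark the paper makes just before stating the result.
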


%
%
%

\subsection*{Acknowledgement} The author would like to thank the referee for many valuable remarks, and Michael Kinyon (Denver) for a motivating discussion on low dimensional Lie algebras. The publication is supported by the European Union and co-funded by the European Social Fund. Project title: \textit{Telemedicine-focused research activities on the field of Matematics, Informatics and Medical sciences.} Project number: TAMOP-4.2.2.A-11/1/KONV-2012-0073.

\end{document}